\newcommand{\e}{\varepsilon}
\newcommand{\w}{\widetilde}
\numberwithin{equation}{section}
\newtheorem{Prop}{\bf Proposition}[section]
\newtheorem{Cor}{\bf Corollary}[section]
\newtheorem{defn}{\bf Definition}[section]
\newtheorem{Rem}{\bf Remark}[section]
\newtheorem{Th}{Theorem}[section]
\begin{document}
\def \b{\Box}

\begin{center}
{\Large {\bf A GROUPOID STRUCTURE ON A VECTOR SPACE}}
\end{center}

\begin{center}
{\bf Vasile POPU\c TA and Gheorghe IVAN}
\end{center}

\setcounter{page}{1}

\pagestyle{myheadings}

{\small {\bf Abstract}. In this paper we introduce the concept of
generalized vector groupoid. Several properties of them are
established.}
{\footnote{{\it AMS classification:} 20L13, 20L99.\\
{\it Key words and phrases:} Ehresmann  groupoid, generalized
vector groupoid.}}

\section{Introduction}
\indent\indent The notion of  groupoid was introduced by H. Brandt
[Math. Ann., \textbf{96}(1926), 360-366; MR 1512323]. This
algebraic structure is similar to a group, with the exception that
products of elements cannot are always be defined.

A generalization of Brandt groupoid has appeared  in a paper of C.
Ehresmann [{\it Oeuvres completes. Parties I.1, I.2}. Dunod,
Paris,1950]. Groupoids and their generalizations (topological
groupoids, Lie and symplectic groupoids etc.) are mathematical
structures that have proved to be useful in many areas of science
(see for instance \cite{cone}, \cite{cdwe}, \cite{gost},
\cite{mack}, \cite{rare}, \cite{wein}).

The concept of vector groupoid has been defined by V. Popu\c ta
and Gh. Ivan \cite{poiv}. In this paper we introduce a groupoid
structure in the sense of Ehresmann on a vector space.

 The paper is organized as follows. In Section 2  we present some basic facts
 about  Ehresmann groupoids. In Section 3  we introduce the notion of generalized  vector
groupoids and its useful properties are established.  The
construction of the induced vector groupoid and some
characterizations of them are given in Section 4.

\section{ Ehresmann groupoids}
\indent\indent We recall the minimal necessary backgrounds on
Ehresmann groupoids (for further details see e.g. \cite{mack},
\cite{wein}).

\begin{defn}(\cite{mack}) A \textbf{groupoid $ G $
over} $ G_{0} $ (\textbf{in the sense of Ehresmann}) is a pair $
(G, G_{0})$ of sets endowed with two surjective maps $\alpha,\beta
:G \rightarrow G_{0}$ (called {\bf source}, respectively {\bf
target}), a partially binary operation (called {\bf
multiplication}) $~m :G_{(2)}:=\{( x,y)\in G\times G~|~\beta(
x)=\alpha(y)\}\rightarrow G,~(x,y)\to m(x,y):=x\cdot y,~$ ($G
_{(2)}$ is the {\bf set of composable pairs}), an injective map $
\e:~G_{0} \longrightarrow ~G~$ (called {\bf inclusion map}) and a
map $~i :G \rightarrow G,~x\to~ i(x):=x^{-1}$ (called
\textbf{inversion}), which verify the following conditions:

(G1)~({\bf associativity}): $~(x\cdot y)\cdot z=x\cdot(y\cdot z)$
in the sense that if one of two products $(x\cdot y)\cdot z$ and
$x\cdot(y\cdot z)$ is defined, then the other product is also
defined and they are equals;

(G2)~({\bf units}): for each  $~x\in~G~\Longrightarrow
(\e(\alpha(x)),x)\in G_{(2)}, (x,\e(\beta(x)))\in ~G_{(2)}~$ and
$~\e(\alpha(x))\cdot x = x = x\cdot \e(\beta(x))$;

 (G3)~({\bf inverses}): for each $~x\in G~\Longrightarrow~
(x^{-1},x)\in G_{(2)},~ (x,x^{-1}) \in G_{(2)}~$ and $ ~x^{-1} x =
\e(\beta(x)),~ x x^{-1}=\e(\alpha(x)).$
\end{defn}

\markboth{Vasile Popu\c ta, Gheorghe Ivan}{ A groupoid structure
on a vector space}

For a groupoid $~G~$ we sometimes write $~(G, \alpha, \beta, m,
\e, \iota, G_{0})~$ or $~(G, \alpha, \beta, G_{0})~$ or $~(G,
G_{0})~$; $~G_{0}~$ is called the {\it base} of $~G~$. The
functions $~\alpha, \beta, m, \e, i~$ are called the {\it
structure functions} of $G$.
 The element $\varepsilon(\alpha(x))$ respectively $\varepsilon(\beta(x))$ is called the
\textit{left unit} respectively \textit{right unit} of $x;$
$~\varepsilon(G_{0})$ is called the \textit{unit set} of $G$. For
each $ u\in G_{0} $, the set $ \alpha^{-1}(u)$ (resp. $
\beta^{-1}(u)$ ) is called \textit{$\alpha-$ fibre} (resp.
\textit{$\beta-$ fibre}) of $ G$ at $ u\in G_{0}$.

{\bf Convention.} We write sometimes $~x y~$ for $~m(x,y)~$, if $
(x,y) \in G_{(2)}.$ Whenever we write a product in a given
groupoid, we are assuming that it is defined.\hfill$\b$

If $~(G, \alpha, \beta ; G_{0})~$ is a groupoid, the map $~(
\alpha, \beta):~G~\to~ G_{0}\times G_{0}~$ defined by $~(\alpha ,
\beta)(x):= (\alpha(x), \beta(x)),~(\forall)~ x\in G~$ is called
the {\it anchor map } of $~G.~$ If the anchor map $~(\alpha,
\beta) : G~\to~G_{0}\times G_{0}~$ is surjective, we say that $(G,
G_{0})$ is {\it transitive}.

In the following proposition we summarize some basic rules of
algebraic calculation in a Ehresmann groupoid obtained directly
from definitions.

\begin{Prop}(\cite{ivan}) In a Ehresmann groupoid $~( G, \alpha, \beta, m,\e, i, G_{0})~$ the
following assertions hold :

$(i)~~~\alpha(x y) = \alpha(x)~$ and $~\beta(x y)=\beta(y)~$ for
any $~(x,y)\in G_{(2)};$

$(ii)~~\alpha( x^{-1}) =\beta (x)~~\hbox{and}~~\beta(x^{-1})
=\alpha(x),~ \forall x\in G;$

$(iii)~\alpha(\varepsilon(u))= u ~~\hbox{and}~~ \beta
(\varepsilon(u)) = u ,~~ (\forall )~ u\in G_{0};$

$(iv)~~\varepsilon(u)\cdot\varepsilon(u)=\varepsilon (u)~~
\hbox{and}~~(\varepsilon(u))^{-1}=\varepsilon(u)~$ for each $~
u\in G_{0};$

$(v)~~~$ if $(x,y)\in G_{(2)},$ then $(y^{-1},x^{-1})\in G_{(2)}$
and  $~(x\cdot y)^{-1}=y^{-1}\cdot x^{-1};$

$(vi)~~$ for any $~u\in G_{0},~$ the set $~G(u):=
\alpha^{-1}(u)\cap \beta^{-1}(u)~$ is a group under the
restriction of the multiplication, called the {\bf isotropy group
at} $~u~$ of $~G;~$

$(vii)~~\varphi : G(\alpha(x)) \to G(\beta(x)),~\varphi (z):=
x^{-1} z x $ is an isomorphism of groups.

$(viii)~$ if $ (G, G_{0} ) $ is transitive, then all isotropy
groups are isomorphic.
 \end{Prop}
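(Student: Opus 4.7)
The plan is to derive the eight items directly from the axioms (G1)--(G3), handling them in a logical rather than stated order. Items (ii) and (iii) follow almost immediately from the axioms, and once they are in hand they can be used to prove (i); the remaining items then build up from (i)--(iv).

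For (ii), the pairs $(x^{-1},x)$ and $(x,x^{-1})$ lie in $G_{(2)}$ by (G3), so the defining composability condition forces $\beta(x^{-1})=\alpha(x)$ and $\alpha(x^{-1})=\beta(x)$. For (iii), given $u\in G_{0}$, use surjectivity of $\alpha$ to pick $x$ with $\alpha(x)=u$; then $(\varepsilon(u),x)\in G_{(2)}$ forces $\beta(\varepsilon(u))=u$, and the dual argument via $\beta$ gives $\alpha(\varepsilon(u))=u$. With (iii) in hand, (i) follows by applying associativity to the triple $(\varepsilon(\alpha(x)),x,y)$: since $\varepsilon(\alpha(x))\cdot x=x$ is defined, so is $\varepsilon(\alpha(x))\cdot(xy)$, which forces $\alpha(xy)=\beta(\varepsilon(\alpha(x)))=\alpha(x)$; the symmetric argument with $\varepsilon(\beta(y))$ on the right yields $\beta(xy)=\beta(y)$. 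For (iv), $\varepsilon(u)\cdot\varepsilon(u)=\varepsilon(u)$ is (G2) applied to $x=\varepsilon(u)$ using (iii), and $(\varepsilon(u))^{-1}=\varepsilon(u)$ follows by comparing $\varepsilon(u)\cdot(\varepsilon(u))^{-1}=\varepsilon(\alpha(\varepsilon(u)))=\varepsilon(u)$ (by (G3) and (iii)) with $\varepsilon(u)\cdot(\varepsilon(u))^{-1}=(\varepsilon(u))^{-1}$ (by (G2) together with (ii), (iii)).

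For (v), composability of $(y^{-1},x^{-1})$ translates via (ii) into composability of $(x,y)$, and the direct computation $(y^{-1}x^{-1})(xy)=y^{-1}\varepsilon(\beta(x))y=y^{-1}y=\varepsilon(\beta(y))=\varepsilon(\beta(xy))$, together with its symmetric counterpart, identifies $y^{-1}x^{-1}$ as a two-sided inverse of $xy$; a short uniqueness-of-inverses step (drawn from (G1)--(G3)) then yields $(xy)^{-1}=y^{-1}x^{-1}$. For (vi), closure of $G(u)$ under multiplication and inversion is immediate from (i) and (ii), $\varepsilon(u)$ is the two-sided identity by (iii) and (G2), and associativity is inherited from (G1). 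For (vii), verify in turn that $\varphi(z)\in G(\beta(x))$ (using (i) and (ii)), that $\varphi$ is multiplicative by inserting $xx^{-1}=\varepsilon(\alpha(x))$ between the two factors and invoking associativity, and that $z\mapsto x z x^{-1}$ is a two-sided inverse of $\varphi$.

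Finally, (viii) is a one-line consequence of (vii) and the definition of transitivity: for $u,v\in G_{0}$, transitivity produces $x\in G$ with $\alpha(x)=u$ and $\beta(x)=v$, whereupon the $\varphi$ of (vii) is an isomorphism $G(u)\cong G(v)$. The main obstacle throughout is really just the careful bookkeeping of which products are defined; every step must verify the composability condition before writing down a product, and the uniqueness-of-inverses claim used in (v) should be isolated as a small lemma (or absorbed into a direct two-sided-inverse calculation) rather than taken for granted.
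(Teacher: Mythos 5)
Your proof is correct, and it follows the intended route: the paper itself gives no argument for Proposition 2.1, merely citing [Ivan, \emph{Special morphisms of groupoids}] and noting that these rules are ``obtained directly from definitions,'' which is exactly the direct derivation from (G1)--(G3) that you carry out. The composability bookkeeping and the uniqueness-of-inverses lemma you flag for item $(v)$ are handled appropriately, so nothing is missing.
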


Applying Proposition 2.1, it is easily to prove the following
proposition.

\begin{Prop}(\cite{ivan})
Let $~( G, \alpha, \beta, m, \varepsilon, i, G_{0})~$ be a
Ehresmann groupoid. The structure functions of $G$ verifies the
following relations:

$(i)~~~\alpha \circ i = \beta,~~ \beta \circ i = \alpha, ~~i\circ
\varepsilon = \varepsilon ~~ \hbox{and}~~ i \circ i = Id_{G};$

$(ii)~~ \alpha \circ \varepsilon= \beta \circ \varepsilon=
Id_{G_{0}}.$
\end{Prop}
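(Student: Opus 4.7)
The plan is to observe that all but one of the claimed equalities of maps are just Proposition 2.1 restated pointwise, and then give a short explicit calculation for the remaining identity $i\circ i=Id_G$.

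For part (ii), the two identities $\alpha\circ\varepsilon=Id_{G_0}$ and $\beta\circ\varepsilon=Id_{G_0}$ are nothing more than $\alpha(\varepsilon(u))=u$ and $\beta(\varepsilon(u))=u$ for all $u\in G_0$, so they follow directly from Proposition 2.1(iii). Similarly, in part (i), the identities $\alpha\circ i=\beta$ and $\beta\circ i=\alpha$ translate, upon evaluation at $x\in G$, into $\alpha(x^{-1})=\beta(x)$ and $\beta(x^{-1})=\alpha(x)$, which are Proposition 2.1(ii); and $i\circ\varepsilon=\varepsilon$ is precisely the statement $(\varepsilon(u))^{-1}=\varepsilon(u)$ of Proposition 2.1(iv). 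So in all four of these cases the proof is a one-line appeal to the relevant part of the previous proposition.

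The only substantive piece is to show that $(x^{-1})^{-1}=x$ for every $x\in G$. My plan is to proceed purely from the axioms (G1)--(G3) together with Proposition 2.1(ii), inserting a unit on the right and using associativity to collapse the expression. Explicitly, using Proposition 2.1(ii) one has $\beta((x^{-1})^{-1})=\alpha(x^{-1})=\beta(x)$, so axiom (G2) gives
$$(x^{-1})^{-1}=(x^{-1})^{-1}\cdot\varepsilon(\beta(x)).$$
Axiom (G3) applied to $x$ yields $\varepsilon(\beta(x))=x^{-1}\cdot x$, and axiom (G3) applied to $x^{-1}$ yields $(x^{-1})^{-1}\cdot x^{-1}=\varepsilon(\beta(x^{-1}))=\varepsilon(\alpha(x))$. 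Substituting the first and then reassociating via (G1) lets us rewrite the right-hand side as $((x^{-1})^{-1}\cdot x^{-1})\cdot x=\varepsilon(\alpha(x))\cdot x$, and a final application of (G2) collapses this to $x$.

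The only thing to be careful about is ensuring that each product written down is actually defined; the main housekeeping task is to keep checking that source-target conditions match, and this is exactly where Proposition 2.1(ii) is used, to identify $\alpha$ and $\beta$ of an inverse with the swapped quantities. I expect no genuine obstacle beyond this bookkeeping: once the pointwise reformulation is noted, the proof is essentially a sequence of citations plus one three-line computation.
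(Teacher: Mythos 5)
Your proof is correct: the reductions to Proposition 2.1(ii)--(iv) are exactly right, and the computation $(x^{-1})^{-1}=(x^{-1})^{-1}\cdot(x^{-1}\cdot x)=((x^{-1})^{-1}\cdot x^{-1})\cdot x=\varepsilon(\alpha(x))\cdot x=x$ is a valid use of (G1)--(G3), with the composability checks handled as you indicate. The paper itself omits the proof entirely (it merely says the result follows by applying Proposition 2.1), so your write-up supplies precisely the intended argument, including the one identity, $i\circ i=Id_{G}$, that is not literally an item of Proposition 2.1 and genuinely needs the short associativity computation you give.
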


\begin{Rem}
Let $(G, \alpha, \beta, m, \varepsilon, i, G_{0} )$ be a Ehresmann
groupoid. If $G_{0} \subseteq G$ and $\varepsilon = Id_{G_{0}} $,
then $(G, \alpha, \beta, m, i, G_{0} )$ is a Brandt groupoid
(\cite{poiv}).
\end{Rem}
\begin{defn}(\cite{mack})
Let $~(G, \alpha, \beta, G_{0})~$ and $~(G^{\prime},
\alpha^{\prime}, \beta^{\prime}, G_{0}^{\prime})~$ be two
groupoids.

$(i)~$ A {\bf morphism of groupoids} or {\bf groupoid morphism}
from $~G~$ into $~G^{\prime}~$ is a pair of maps $~( f, f_{0} ),~$
where $~f: G \to G^{\prime}~$ and $~f_{0} :
G_{0}~\to~G_{0}^{\prime}~$ such that the following conditions
hold:

$(1)~~~\alpha^{\prime}\circ f = f_{0} \circ
\alpha,~\beta^{\prime}\circ f = f_{0} \circ \beta $;

$(2)~~~f(m(x,y)) = m^{\prime}(f(x),f(y))~$ for all $~(x,y)\in
G_{(2)}.$

$(ii)~$ A groupoid morphism $~( f, f_{0} ): (G,
G_{0})~\to~(G^{\prime}, G_{0}^{\prime})$ such that $f$ and $f_{0}$
are bijective maps, is called {\bf isomorphism of groupoids}.
\end{defn}

\begin{Prop}(\cite{ivan})
If $~( f, f_{0} ): (G, G_{0})~\to~(G^{\prime}, G_{0}^{\prime})$ is
groupoid morphism, then the following relations hold:\\[-0.5cm]
$$f \circ \varepsilon = \varepsilon^{\prime}  \circ f_{0}~~~\hbox{and}~~~ f
\circ i = i^{\prime}  \circ f.$$\\[-1.21cm]
\end{Prop}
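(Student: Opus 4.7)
The plan is to verify the two identities pointwise, using the compatibility conditions from Definition 2.2 together with the algebraic rules collected in Propositions 2.1 and 2.2.

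First I would prove $f\circ \varepsilon = \varepsilon^{\prime}\circ f_{0}$. Fix $u\in G_{0}$ and set $x := f(\varepsilon(u))$. By condition (1) of Definition 2.2 combined with Proposition 2.2(ii),
$\alpha^{\prime}(x) = f_{0}(\alpha(\varepsilon(u))) = f_{0}(u)$ and similarly $\beta^{\prime}(x) = f_{0}(u)$. On the other hand, applying $f$ to the identity $\varepsilon(u)\cdot\varepsilon(u)=\varepsilon(u)$ from Proposition 2.1(iv) and using condition (2) of Definition 2.2 gives $x\cdot x = x$. So $x$ is an idempotent living in the isotropy fibre at $f_{0}(u)$. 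Multiplying $x\cdot x = x$ on the right by $x^{-1}$ and using the unit axiom (G2) on the left-hand side, one obtains $x = \varepsilon^{\prime}(\alpha^{\prime}(x)) = \varepsilon^{\prime}(f_{0}(u))$, which is exactly the required identity.

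Next, for $f\circ i = i^{\prime}\circ f$, fix $x\in G$ and apply $f$ to the two inverse axioms (G3):
$$f(x)\cdot f(x^{-1}) = f(\varepsilon(\alpha(x))), \qquad f(x^{-1})\cdot f(x) = f(\varepsilon(\beta(x))).$$
Using the first identity just proved, together with condition (1) of Definition 2.2, the right-hand sides rewrite as $\varepsilon^{\prime}(\alpha^{\prime}(f(x)))$ and $\varepsilon^{\prime}(\beta^{\prime}(f(x)))$ respectively. Thus $f(x^{-1})$ satisfies the defining relations of an inverse of $f(x)$ in $G^{\prime}$. Invoking uniqueness of inverses in an Ehresmann groupoid (a quick consequence of (G1)--(G3)), we conclude $f(x^{-1}) = f(x)^{-1} = i^{\prime}(f(x))$.

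Neither step is truly difficult; the only subtle point is establishing the first identity, since the inclusion map $\varepsilon$ is not part of the data that morphisms are required to preserve \emph{a priori} --- that compatibility must be squeezed out of the multiplicative structure. The idempotent argument above is the natural way to do this, and everything else is a straightforward manipulation using the structure equations already available.
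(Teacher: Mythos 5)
Your argument is correct. Note that the paper itself gives no proof of this proposition (it is quoted from the reference on special morphisms of groupoids), so there is nothing to compare against line by line; but what you wrote is the standard derivation and it is sound. The first identity is indeed the only delicate point: you correctly observe that $x:=f(\varepsilon(u))$ satisfies $x\cdot x=x$ with $\alpha^{\prime}(x)=\beta^{\prime}(x)=f_{0}(u)$, and the cancellation $(x\cdot x)\cdot x^{-1}=x\cdot(x\cdot x^{-1})$ forces $x=\varepsilon^{\prime}(f_{0}(u))$ --- the step where you apply (G2) uses that $\alpha^{\prime}(x)=\beta^{\prime}(x)$, which you have already established, so the argument closes. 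For the second identity, the appeal to uniqueness of inverses is legitimate and genuinely quick: if $y\cdot z=\varepsilon^{\prime}(\alpha^{\prime}(y))$ with $(y,z)$ composable, then $z=\varepsilon^{\prime}(\alpha^{\prime}(z))\cdot z=(y^{-1}\cdot y)\cdot z=y^{-1}\cdot(y\cdot z)=y^{-1}$; you could have included this one line, but omitting it is not a gap.
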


If $~G_{0} = G_{0}^{\prime}~$ and $~f_{0} = Id_{G_{0}},~$ we say
that $~f : ( G , G_{0})~\to~( G^{\prime}, G_{0} )~$ is a
 {\it $~G_{0}-$  morphism of groupoids}.

{\bf Example 2.1.}$(i)~$ A nonempty set $~G_{0}~$ may be
considered to be a groupoid over $~G_{0},~$ called the {\it nul
groupoid} associated to $~G_{0}.~$ For this, we take $~\alpha =
\beta = \varepsilon = i = Id_{G_{0}}~$ and $~ u\cdot u = u~$ for
all $~u\in G_{0}.~$

$(ii)~$ A group $~G~$ having $~e~$ as unity may be considered to
be a $~\{ e \}$ - groupoid with respect to structure
functions:\\[0.2cm]
$\alpha (x) = \beta (x): = e$; $~~G_{(2)}:=\{ {(x,y) \in G\times
G~|~\beta (x) =\alpha (y)}\}= G\times G,~~ m (x,y): =
xy$;\\[0.2cm]
$\varepsilon:\{ e \} \to G,~~\varepsilon(e):= e~$ and $~i:G \to G,
~i(x):= x^{ - 1}$.

Conversely, a groupoid with one unit, i.e. $G_{0}=\{e\}$, is a
group.

$(iii)~$ The Cartesian product $~G:= X \times X~$ has a structure
of groupoid over  $ X $  by taking the structure functions as
follows: $~\overline{\alpha}(x,y):= x,~ \overline{\beta}(x,y):=
y;~$ the elements $~ (x,y)~$ and $~(y^{\prime},z)~$ are composable
in $~G:=X \times X~$ iff $~y^{\prime} = y~$ and we define
$~(x,y)\cdot (y,z) = (x,z)~$, the inclusion map $
\overline{\varepsilon}:X \to X\times X$ is given by
$\overline{\varepsilon}(x):=(x,x)$ and the inverse of $~(x,y)~$ is
defined by $~(x,y)^{-1}:= (y,x).~$ This is called the {\it pair
groupoid} associated to set $X$. Its unit set is $~G_{0}=\{
(x,x)\in X\times X | x\in X\}.$ \hfill$\b$

\section{Generalized vector groupoids}

 In this section we introduce a structure of Ehresmann groupoid  on a pair of vector spaces.
\begin{defn}
A {\bf generalized vector groupoid over a field $K$}, is a
Ehresmann groupoid $~(V, \alpha, \beta, \odot, \varepsilon, i,
V_0)$ such that:

\noindent(3.1.1) $~V$ and $V_{0}$ are vector spaces over $K$.

\noindent(3.1.2) The source and the target maps $\alpha : V \to
V_{0}$ and $\beta : V \to V_{0}$ are linear maps.

\noindent(3.1.3) The inclusion $ \varepsilon : V_{0} \to V$
 and the inversion  $~i:V\longrightarrow V,\ x\longmapsto
i(x):=x^{-1}$ are linear maps and the
following condition is verified:\\[0.2cm]
$(1)~~~~~~~~~~~
x+x^{-1}=\varepsilon(\alpha(x))+\varepsilon(\beta(x)),\ \mbox{for
all } x\in V.$\\[0.2cm]
(3.1.4) The map $~m:V_{(2)}:=\{(x,y)\in V\times
V~|~\alpha(y)=\beta(x)\} \to V,$ $(x,y)\longmapsto m(x,y):=x\odot
y,~$  satisfy the following conditions :
\begin{enumerate}
  \item $x\odot(y+z-\varepsilon(\beta(x)))=x\odot y+x\odot z-x$, for all $x,y,z\in V$, such that $\alpha(y)=\beta(x)=\alpha(z)$.
  \item $x\odot(ky+(1-k)\varepsilon(\beta(x)))=k(x\odot y)+(1-k)x$, for all $x,y\in V$, such that $\alpha(y)=\beta(x)$.
  \item $(y+z-\varepsilon(\alpha(x)))\odot x=y\odot x+z\odot x-x$, for all $x,y,z\in V$, such that $\alpha(x)=\beta(y)=\beta(z)$.
  \item $(ky+(1-k)\varepsilon(\alpha(x)))\odot x=k(y\odot x)+(1-k)x$ for all $x,y\in V$, such that $\alpha(x)=\beta(y)$.
\end{enumerate}
\end{defn}
When there can be no confusion we put $xy$ or $x\cdot y$ instead
of $x\odot y$.

From Definition 3.1 follows the following corollary.
\begin{Cor}
Let $~(V, \alpha, \beta, \odot, \iota,  V_0)$ be a vector
groupoid. Then:

$(i)~~~$ The source and target $ \alpha, \beta : V \to V_{0} $ are
linear epimorphisms.

$(ii)~~$ The inversion $ \iota : V \to V $ is a linear
automorphism.

$(iii)~$ The fibres $\alpha^{-1}(0)$ and $ \beta^{-1}(0) $ and the
isotropy group\\ $ V(0):= \alpha^{-1}(0)\cap \beta^{-1}(0) $ are
vector subspaces of the vector space $V$.
\end{Cor}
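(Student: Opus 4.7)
The plan is to observe that each of the three claims reduces quickly to a standard linear-algebra fact once the linearity and surjectivity already built into the hypotheses are extracted, so the ``proof'' is essentially a bookkeeping exercise collecting results from Section~2 and Definition~3.1.

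For part $(i)$, I would note that $\alpha$ and $\beta$ are declared surjective as part of the underlying Ehresmann groupoid structure (Definition~2.1), while condition (3.1.2) of Definition~3.1 upgrades them to linear maps between the vector spaces $V$ and $V_0$. A surjective linear map is by definition a linear epimorphism, so nothing more is needed.

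For part $(ii)$, linearity of $i$ is built into (3.1.3). To get bijectivity I would invoke Proposition~2.2$(i)$, which gives $i \circ i = Id_V$; this shows that $i$ is its own inverse, hence a bijection, and a bijective linear endomorphism of a vector space is a linear automorphism.

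For part $(iii)$, I would rewrite $\alpha^{-1}(0) = \ker \alpha$ and $\beta^{-1}(0) = \ker \beta$. Since $\alpha$ and $\beta$ are linear by (3.1.2), their kernels are vector subspaces of $V$, and the intersection of two vector subspaces is again a vector subspace, so $V(0) = \ker \alpha \cap \ker \beta$ is a subspace as well. I do not expect any genuine obstacle here; the only subtle point is verifying that the groupoid-theoretic notation $\alpha^{-1}(0)$ really means the kernel, which is immediate because $0 \in V_0$ and $\alpha$ is linear. Notably, none of the multiplicative axioms (3.1.4.1)--(3.1.4.4) are needed for this corollary, which makes the proof short.
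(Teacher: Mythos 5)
Your proposal is correct and is exactly the argument the paper intends: the paper states only that the corollary ``follows from Definition 3.1'' and omits the proof, and your three observations (surjectivity from Definition 2.1 plus linearity from (3.1.2); $i\circ i = Id_V$ from Proposition 2.2 plus linearity from (3.1.3); kernels and their intersection are subspaces) supply precisely the missing details.
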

\begin{Prop}
If  $(V, \alpha, \beta, \odot,\varepsilon, i, V_0)$ is a vector
groupoid, then:

$(i)~~~~ \varepsilon(0)\odot x=x,\ \forall\ x\in \alpha^{-1}(0)$;

$(ii)~~~ x\odot \varepsilon(0)=x,\ \forall\ x\in \beta^{-1}(0)$.
\end{Prop}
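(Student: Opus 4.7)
The plan is to observe that both statements are essentially immediate consequences of the unit axiom (G2) of the underlying Ehresmann groupoid, once we identify $\varepsilon(0)$ as the unit attached to the zero element of $V_0$. The role of the vector-space hypotheses is merely to guarantee that $0 \in V_0$ and to make $\varepsilon(0)$ a meaningful element.

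First I would fix $x \in \alpha^{-1}(0)$ and compute the left unit of $x$. By definition, the left unit is $\varepsilon(\alpha(x))$, and since $\alpha(x) = 0$, this equals $\varepsilon(0)$. I would then invoke axiom (G2), which asserts that $(\varepsilon(\alpha(x)), x) \in V_{(2)}$ and $\varepsilon(\alpha(x)) \odot x = x$; substituting gives $\varepsilon(0) \odot x = x$. One can double-check composability directly: using Proposition~2.2(ii), $\beta(\varepsilon(0)) = 0 = \alpha(x)$, so the pair indeed lies in $V_{(2)}$.

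The proof of (ii) is completely parallel. For $x \in \beta^{-1}(0)$, the right unit of $x$ is $\varepsilon(\beta(x)) = \varepsilon(0)$. Axiom (G2) again gives $x \odot \varepsilon(\beta(x)) = x$, hence $x \odot \varepsilon(0) = x$; composability holds by $\beta(x) = 0 = \alpha(\varepsilon(0))$, again using Proposition~2.2(ii).

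There is essentially no obstacle here: none of the four multilinearity-type axioms in (3.1.4) of Definition~3.1 are needed, nor is the inversion condition (1). The proposition is purely a specialization of the Ehresmann unit axiom to the distinguished element $0 \in V_0$, and the only mildly subtle point is remembering to cite Proposition~2.2(ii) to see that $\varepsilon(0)$ lies in the correct fibre for the composition to be defined.
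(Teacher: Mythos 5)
Your proof is correct and follows essentially the same route as the paper's: identify $\varepsilon(0)$ as $\varepsilon(\alpha(x))$ (resp.\ $\varepsilon(\beta(x))$), check composability via $\beta\circ\varepsilon=Id_{V_0}$ (Proposition 2.2(ii)), and apply the unit axiom (G2). No differences worth noting.
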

\begin{proof}
(i) If $x\in\alpha^{-1}(0)$, then $\alpha(x)=0$. We have
$\beta(\varepsilon(0))=0$, since $\beta\circ \varepsilon =
Id_{V_{0}}$. So $(\varepsilon(0),x)\in V_{(2)}$ and, using the
condition {\it (G2)} from Definition 2.1, one obtains that
$\varepsilon(0)\cdot x=\varepsilon(\alpha(x))\cdot x = x.~$
Similarly, we prove that the assertion(ii) holds.
\end{proof}
\begin{Rem}
If in Definition 3.1, we consider $ V_{0}\subseteq V$ and $
\varepsilon = Id_{V_{0}},$ then\\ $(V, \alpha, \beta, \odot,
\varepsilon, i, V_{0})$ is a vector groupoid, see \cite{poiv}. In
this case, we will say that $(V, V_{0})$ is a vector $V_{0}-$
groupoid.
\end{Rem}
{\bf Example 3.1.} Let $V$ be a vector space over a field $K$. If
we define the maps $ \alpha_{0}, \beta_{0}, \iota_{0}: V\to
V,~\alpha_{0}(x)=\beta_{0}(x)=0,~ \iota_{0}(x)=-x,$ and $ m
_{0}(x,y)=x+y$, then $(V, \alpha_{0}, \beta_{0}, m_{0}, \iota_{0},
V_{0}=\{0\})$ is a vector groupoid called {\it vector groupoid
with a single unit}.
 Therefore, each vector space $ V $ can be regarded as vector $\{0\}-$
 groupoid.\hfill$\b$

{\bf Example 3.2.} Let $V$ be a vector space over a field $K$.
Then $ V$ has a structure of null groupoid over $V$ (see Example
2.1(i)). The structure functions are $~\alpha = \beta
=\varepsilon=\iota = Id_{V}$  and $ x\odot x = x $ for all $x\in
V$. We have that $V_{0} = V$ and the maps $ \alpha,
\beta,\varepsilon, \iota $ are linear. Since $ x+\iota(x) = x+ x $
and $ \alpha(x) + \beta(x) = x+x $ imply that the condition
3.1.3(1) holds. It is easy to verify the conditions 3.1.4(1)-
3.1.4(4) from Definition 3.1. Then $V$ is a vector groupoid,
called the {\it null vector groupoid} associated to $V$.\hfill$\b$

 {\bf Example 3.3.} Let $V$ be a vector space over
a field $K$. We consider the pair groupoid $( V\times V,
\overline{\alpha}, \overline{\beta},
\overline{m},\overline{\varepsilon}, \overline{i}, V )$ associated
to $V$ (see Example 2.1(iii)). We have that $ V\times V$ is a
vector space and the source $\overline{\alpha}$ and target
$\overline{\beta}$ are linear maps. Also, the inclusion map
$\overline{\varepsilon}: V \to V\times V$ and the inversion map
$\overline{i} : V\times V \to V\times V$ are
linear.  For all $(x,y)\in V\times V$ we have\\
$(x,y)+\overline{i}(x,y)=(x,y)+(y,x)=(x+y,x+y)$ and\\
$\overline{\varepsilon}(\overline{\alpha}(x,y))+
\overline{\varepsilon}(\overline{\beta}(x,y))=\overline{\varepsilon}(x)+
\overline{\varepsilon}(y)=(x,x)+(y,y)=(x+y,x+y)$\\
and it follows that
$~(x,y)+\overline{i}(x,y)=\overline{\varepsilon}(\overline{\alpha}(x,y))+
\overline{\varepsilon}(\overline{\beta}(x,y)).$

Therefore, the conditions $(3.1.1)- (3.1.3) $ are satisfied.

For to prove that the condition (3.1.4)(1) is verified, we
consider the elements $ x=(x_{1}, x_{2}), y=(y_{1}, y_{2}),
z=(z_{1}, z_{2}) $ from $V\times V$ such that
$\overline{\alpha}(y)=\overline{\beta}(x)=\overline{\alpha}(z).$
Then $ y_{1}=x_{2}=z_{1}$. We have\\
$ x\odot_{V\times V} (y +z
-\overline{\varepsilon}(\overline{\beta}(x)))=(x_{1},
x_{2})\odot_{V\times V} ( (x_{2}, y_{2}) + (x_{2}, z_{2})
-\overline{\varepsilon}(\overline{\beta}(x_{1}, x_{2})))=$\\
$=(x_{1}, x_{2})\odot_{V\times V} ( (x_{2}, y_{2}) + (x_{2},
z_{2}) -\overline{\varepsilon}( x_{2}))= (x_{1},
x_{2})\odot_{V\times V} ( (x_{2}, y_{2}) + (x_{2}, z_{2}) -$\\
$-( x_{2}, x_{2}))= (x_{1}, x_{2})\odot_{V\times V}(x_{2}, y_{2} +
z_{2} - x_{2})=(x_{1},y_{2} + z_{2} - x_{2}).$

On the other hand, we have\\
$ x\odot_{V\times V} y +x\odot_{V\times V} z - x = (x_{1},
x_{2})\odot_{V\times V} (x_{2}, y_{2}) + (x_{1},
x_{2})\odot_{V\times V} (x_{2}, z_{2})- (x_{1}, x_{2})= (x_{1},
y_{2})+ (x_{1}, z_{2})- (x_{1}, x_{2})=(x_{1}, y_{2}+ z_{2}-
x_{2}).$

Hence, $ x\odot_{V\times V} (y +z
-\overline{\varepsilon}(\overline{\beta}(x)))= x\odot_{V\times V}
y +x\odot_{V\times V} z - x $ and so the relation (3.1.4)(1)
holds.

 In the same manner, we prove that the relations 3.1.4(2) -
3.1.4(4) are verified. Hence $V\times V$ is a vector groupoid
called the  {\it pair vector groupoid} associated to
$V$.\hfill$\b$

\begin{defn}
Let $ ( V_{1}, \alpha_{1}, \beta_{1}, \odot_{1}, \varepsilon_{1},
i_{1}, V_{1,0} )$ and  $ ( V_{2}, \alpha_{2}, \beta_{2},
\odot_{2}, \varepsilon_{2}, i_{2}, V_{2,0} )$  be two vector
groupoids. A groupoid morphism  $ (f, f_{0}): (V_{1}, V_{1,0})\to
(V_{2}, V_{2,0}) $ with property that $ f: V_{1} \to V_{2} $ and $
f_{0}: V_{1,0} \to V_{2,0}$ are linear maps, is called  {\bf
vector groupoid morphism} or {\bf morphism of vector groupoids}.

If $ V_{2,0}=V_{1,0} $ and $ f_{0}=Id_{V_{1,0}}$, then we say that
$ (f, Id_{V_{1,0}}): (V_{1}, V_{1,0})\to (V_{2}, V_{1,0}) $ is a
{\bf $ V_{1,0}-$ morphism of vector groupoids}.  It is denoted  by
$ f: V_{1} \to V_{2}$.
\end{defn}
\begin{Prop}
Let $(V,\alpha, \beta, \odot, \varepsilon, i , V_{0})$ be a vector
groupoid. The anchor map $(\alpha, \beta): V \to V_{0}\times V_{0}
$ is a $ V_{0}-$ morphism of vector groupoids from the vector
groupoid $(V, V_{0})$  into the pair vector groupoid $ (
V_{0}\times V_{0}, \overline{\alpha}, \overline{\beta},
\overline{m}, \overline{\varepsilon}, \overline{i}, V_{0}) $.
\end{Prop}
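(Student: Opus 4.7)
The plan is to check the three items that together constitute being a $V_0$-morphism of vector groupoids: linearity of the underlying maps, compatibility with source and target, and multiplicativity on composable pairs. Since $f_0 = Id_{V_0}$ is trivially linear and bijective, the work is to check that $f := (\alpha,\beta)\colon V \to V_0 \times V_0$ is linear and that the pair $\bigl((\alpha,\beta), Id_{V_0}\bigr)$ satisfies conditions (1) and (2) of Definition 2.3 relative to the pair vector groupoid structure on $V_0 \times V_0$.

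First, I would record that $f$ is linear. This is immediate from Definition 3.1: both $\alpha$ and $\beta$ are linear by (3.1.2), so their product map $x \mapsto (\alpha(x), \beta(x))$ is linear into $V_0 \times V_0$ with the componentwise vector space structure.

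Next, I would verify the source/target compatibility, i.e. $\overline{\alpha} \circ f = Id_{V_0} \circ \alpha$ and $\overline{\beta} \circ f = Id_{V_0} \circ \beta$. Using $\overline{\alpha}(u,v) = u$ and $\overline{\beta}(u,v) = v$ from Example 2.1(iii), this reduces to $\overline{\alpha}(\alpha(x), \beta(x)) = \alpha(x)$ and $\overline{\beta}(\alpha(x), \beta(x)) = \beta(x)$, which both hold by inspection.

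Then I would check multiplicativity. Take $(x,y) \in V_{(2)}$, so $\beta(x) = \alpha(y)$. By Proposition 2.1(i) applied to the groupoid $V$, $\alpha(x \odot y) = \alpha(x)$ and $\beta(x \odot y) = \beta(y)$, hence
\[
f(x \odot y) = (\alpha(x), \beta(y)).
\]
On the other hand, $f(x) = (\alpha(x), \beta(x))$ and $f(y) = (\alpha(y), \beta(y))$; since $\overline{\beta}(f(x)) = \beta(x) = \alpha(y) = \overline{\alpha}(f(y))$, the pair $(f(x), f(y))$ is composable in $V_0 \times V_0$, and by the pair-groupoid multiplication rule $(u,v)\cdot(v,w) = (u,w)$ we get
\[
f(x) \,\overline{m}\, f(y) = (\alpha(x), \beta(x)) \cdot (\alpha(y), \beta(y)) = (\alpha(x), \beta(y)) = f(x\odot y).
\]

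There is no real obstacle here; the argument is essentially bookkeeping, using only Proposition 2.1(i) (to identify source and target of a product in $V$) and the explicit description of the pair groupoid from Example 2.1(iii). The conclusion is that $\bigl((\alpha,\beta), Id_{V_0}\bigr)$ is a groupoid morphism with linear component maps, i.e. a $V_0$-morphism of vector groupoids as claimed. $\Box$
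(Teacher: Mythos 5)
Your proposal is correct and follows essentially the same route as the paper's own proof: check $\overline{\alpha}\circ f=\alpha$ and $\overline{\beta}\circ f=\beta$ directly, use Proposition 2.1(i) together with the pair-groupoid multiplication to get $f(x\odot y)=f(x)\,\overline{m}\,f(y)$, and deduce linearity of $f=(\alpha,\beta)$ from linearity of $\alpha$ and $\beta$. The only (harmless) difference is the order of the checks and your explicit remark that $(f(x),f(y))$ is composable, which the paper leaves implicit.
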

\begin{proof}
 We denote $ (\alpha, \beta):=f $. Then $ f(x)=(\alpha(x),\beta(x)),$ for all $x\in V.$ We prove that $
 \overline{\alpha}\circ f= Id_{V_{0}}\circ \alpha $ or
 equivalently, $
 \overline{\alpha}\circ f = \alpha $. Indeed, for all $x\in V$ we have
 $
 (\overline{\alpha}\circ f)(x)=\overline{\alpha}(f(x))=\overline{\alpha}(\alpha(x), \beta(x))
 =\alpha(x).$

 Therefore, $
 \overline{\alpha}\circ f = \alpha $. Similarly, we verify that $
 \overline{\beta}\circ f = \beta $. Hence, the condition (i) from
 Definition 2.2 is satisfied. For $(x,y)\in V_{(2)}$ we have\\
 $ f(x\odot_{V} y )= (\alpha(x\odot_{V}y),
\beta(x\odot_{V}y))= (\alpha(x), \beta(y)) ~$ and\\
 $\w{m}(f(x),f(y))=
\w{m}( (\alpha(x),\beta(x)), (\alpha(y),\beta(y))) =
(\alpha(x),\beta(y)).$

Therefore, $ f( x\odot_{V} y )=\w{m}(f(x),f(y))$, for all
$(x,y)\in V_{(2)}$. Hence, the equality (ii) from Definition 2.2
is verified. Thus $ f : V \to V_{0}\times V_{0} $ is a  $V_{0}-$
morphism of groupoids.

Let $ x, y \in V $ and $ a,b\in K$. Since $ \alpha, \beta $ are linear maps, we have\\
$ f(ax+by) = (\alpha(ax+by), \beta(ax+by)) = ( a \alpha(x) +
b\alpha(y), a \beta(x) + b\beta(y))= $\\
$= a(\alpha(x) , \beta(x)) + b (\alpha(y) , \beta(y))= a f(x) + b
f(y) $,\\
i.e. $f$ is a linear map. Therefore, the conditions from
Definition 3.2 are verified. Hence $f$ is a $ V_{0}-$ morphism of
vector groupoids.
\end{proof}

\section{ The induced vector groupoid of a vector groupoid by a linear map}

Let $(V, \alpha, \beta, \odot_{V}, \varepsilon, i, V_{0})$ be a
vector groupoid (in the sense of Definition 3.1) and let
$~h:X\longrightarrow V_{0}~$ be a linear map. We consider the
set:\\[-0.3cm]
$$h^*(V)=\{(x,y,a)\in X\times X\times V~|~h(x)=\alpha(a),~
h(y)=\beta (a)\}.$$\\[-0.9cm]

Since $X$ and $V$ are vector spaces and $h$ is a linear map, on
obtains that $ h^{*}(V) $ has a canonical structure of vector
space.

For the pair $(h^{*}(V),X)$ we define the following structure
functions.

The source and target $\alpha^{*}, \beta^{*}:h^{*}(V) \to X $,
inversion $ i^{*}:h^{*}(V)\to h^{*}(V)~$ and inclusion  $
\varepsilon^{*}: X \to h^{*}(V)~$ are defined by\\[-0.2cm]

$\alpha^{*}(x,y,a):=x ,~~
\beta^{*}(x,y,a):=y,~~i^{*}(x,y,a):=(y,x,i(a)),~(\forall)~(x,y,a)\in
h^{*}(V)$,\\[-0.2cm]

$ \varepsilon^{*}(x):=(x,x,\varepsilon(h(x))), ~~(\forall)~ x\in
X$.\\[-0.2cm]

 The partially multiplication  $\odot_{h^{*}(V)}: h^{*}(V)_{(2)} \to h^{*}(V)
$, where\\
 $h^{*}(V)_{(2)}:=\{((x,y,a),(y^{\prime},z,b))\in
h^{*}(V)\times h^{*}(V)~|~y=y^{\prime}~ \hbox{and}~(a,b)\in
V_{(2)}\}$ is  given by:\\[-0.6cm]
 $$(x,y,a)\odot_{h^{*}(V)}(y,z,b):=(x,z, a\odot_{V}
 b).$$\\[-1.1cm]
\begin{Prop}
$(h^{*}(V), \alpha^{*}, \beta^{*},
\odot_{h^{*}(V)},\varepsilon^{*}, i^{*}, X )$ is a vector
groupoid, called the {\bf induced vector groupoid
 of $(V,V_{0})$ by the linear map $ h: X \to V_{0}$}.
\end{Prop}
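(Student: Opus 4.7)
The plan is to verify, in order: (a) $h^{*}(V)$ is a vector subspace of $X\times X\times V$ and hence inherits a vector space structure; (b) $(h^{*}(V),X)$ satisfies the Ehresmann groupoid axioms (G1)--(G3) of Definition 2.1; (c) the structure maps $\alpha^{*},\beta^{*},\varepsilon^{*},i^{*}$ are linear; and (d) the vector-groupoid conditions (3.1.3)(1) and (3.1.4)(1)--(4) hold. The key observation throughout is that the defining constraints $h(x)=\alpha(a)$ and $h(y)=\beta(a)$ are preserved under coordinate-wise sums and scalar multiples because $h$, $\alpha$, $\beta$ are all linear; this makes (a) immediate and is invoked repeatedly below.

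For (b), each axiom reduces, via projection onto the third coordinate, to the corresponding axiom in $V$. For instance, to check (G3) one computes
$$ i^{*}(x,y,a)\odot_{h^{*}(V)}(x,y,a) = (y,y,i(a)\odot_{V} a) = (y,y,\varepsilon(\beta(a))) = (y,y,\varepsilon(h(y))) = \varepsilon^{*}(y) = \varepsilon^{*}(\beta^{*}(x,y,a)), $$
and the other inverse and unit identities, together with associativity, follow the same pattern. For (c), $\alpha^{*}$ and $\beta^{*}$ are linear as projections onto the first two factors, while linearity of $\varepsilon^{*}$ and $i^{*}$ is immediate from linearity of $\varepsilon$, $i$, and $h$.

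For (d), the identity (3.1.3)(1) reduces on the third coordinate to $a+i(a)=\varepsilon(\alpha(a))+\varepsilon(\beta(a))$, which holds in $V$ by hypothesis; the first two coordinates are equal by direct inspection. Each of (3.1.4)(1)--(4) follows the same scheme: once the definitions of $\odot_{h^{*}(V)}$, $\varepsilon^{*}$, and the source/target are expanded, the first two coordinates collapse trivially through the composability constraints (e.g.\ $y+y-y=y$), and the third coordinate becomes precisely the corresponding relation for $\odot_{V}$ in $V$, where $\varepsilon(h(y))=\varepsilon(\beta(a))$ (or $\varepsilon(h(x))=\varepsilon(\alpha(a))$) supplies the unit needed to invoke the $V$-side hypothesis.

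The main obstacle is the purely bookkeeping nature of (3.1.4)(1)--(4): none is conceptually difficult, but each requires tracking three coordinates simultaneously and applying the defining constraints of $h^{*}(V)$ at the right moment so that the third-coordinate identity reduces cleanly to a property already available in $V$.
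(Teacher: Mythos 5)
Your proposal is correct and follows essentially the same route as the paper's proof: verify the Ehresmann axioms and the vector-groupoid conditions coordinate-wise, using linearity of $h$, $\alpha$, $\beta$ to see that $h^{*}(V)$ is a subspace, and using the defining constraints $h(x)=\alpha(a)$, $h(y)=\beta(a)$ to convert $\varepsilon(h(\cdot))$ into $\varepsilon(\alpha(a))$ or $\varepsilon(\beta(a))$ so that each identity reduces in the third coordinate to the corresponding identity in $V$. The paper simply writes out the bookkeeping you describe in full.
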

\begin{proof}
$(i)~$  It must to verify that the conditions of Definition 2.1
are satisfied. For this, we consider an arbitrary element $
x=(x_{1}, y_{1},a_{1})\in h^{*}(V)$. Then $ h(x_{1})=\alpha(a_{1})
$ and $ h(y_{1})=\beta(a_{1})$.

$(a)~$ Let now  $y,z\in h^{*}(V)$ such that $
\beta^{*}(x)=\alpha^{*}(y) $ and $ \beta^{*}(y)=\alpha^{*}(z) $.
Then, if we take $ y=(x_{2}, y_{2},a_{2}), z=(x_{3},
y_{3},a_{3})$, it follows $ y_{1}=x_{2} $ and $ y_{2}=x_{3} $.
Therefore, $  y=(y_{1}, y_{2},a_{2}) $ and  $z=(y_{2}, y_{3},a_{3})$. We have\\[0.2cm]
$(1)~~~ (x\odot_{h^{*}(V)}y)\odot_{h^{*}(V)}z = ((x_{1},
y_{1},a_{1})\odot_{h^{*}(V)}(y_{1},
y_{2},a_{2}))\odot_{h^{*}(V)}(y_{2}, y_{3},a_{3}) =$\\
$~~~~~~~= (x_{1},
y_{2},a_{1}\odot_{V}a_{2})\odot_{h^{*}(V)}(y_{2}, y_{3},a_{3})=
(x_{1}, y_{3},(a_{1}\odot_{V}a_{2})\odot_{V}a_{3}))~$ and\\[0.2cm]
$(2)~~~ x\odot_{h^{*}(V)} (y\odot_{h^{*}(V)}z) = (x_{1},
y_{1},a_{1})\odot_{h^{*}(V)}((y_{1},
y_{2},a_{2})\odot_{h^{*}(V)}(y_{2}, y_{3},a_{3}))=$\\
$~~~~~~~= (x_{1}, y_{1},a_{1})\odot_{h^{*}(V)}( y_{1},
y_{3},a_{2}\odot_{V}a_{3})= (x_{1},
y_{3},a_{1}\odot_{V}(a_{2}\odot_{V}a_{3}))$.

Using the relations (1),(2) and the fact that $\odot_{V}$ is
associative in $V$, it follows $~
(x\odot_{h^{*}(V)}y)\odot_{h^{*}(V)}z =x\odot_{h^{*}(V)}
(y\odot_{h^{*}(V)}z).$ Hence, the condition (G1) from Definition
2.1 holds.

$(b)~$ We have\\[0.1cm]
$\varepsilon^{*}(\alpha^{*}(x))\odot_{h^{*}(V)} x=
\varepsilon^{*}(\alpha^{*}(x_{1}, y_{1},a_{1}))\odot_{h^{*}(V)}
(x_{1}, y_{1},a_{1})=\varepsilon^{*}(x_{1})\odot_{h^{*}(V)}
(x_{1}, y_{1},a_{1})=$\\[0.1cm]
$~~~~~= (x_{1}, x_{1},\varepsilon(h(a_{1}))) \odot_{h^{*}(V)}
(x_{1}, y_{1},a_{1})= (x_{1}, y_{1},
\varepsilon(h(a_{1}))\odot_{V}a_{1})=(x_{1},
y_{1},a_{1})=x$.\\[-0.2cm]

Similarly, we verify that $x\odot_{h^{*}(V)}
\varepsilon^{*}(\beta^{*}(x))=x.$ Hence, the condition (G2) from
Definition 2.1 holds.

$(c)~$ We have\\[0.1cm]
$(3)~~~i^{*}(x)\odot_{h^{*}(V)} x= i^{*}(x_{1},
y_{1},a_{1})\odot_{h^{*}(V)}(x_{1}, y_{1},a_{1})=(y_{1},
x_{1},i(a_{1}))\odot_{h^{*}(V)}(x_{1}, y_{1},a_{1})=$\\[0.1cm]
$~~~~~=(y_{1}, y_{1},i(a_{1})\odot_{V} a_{1})=
(y_{1},y_{1},\varepsilon(\beta(a_{1}))) $ and\\[0.1cm]
$(4)~~~\varepsilon^{*}(\beta^{*}(x))=
\varepsilon^{*}(\beta^{*}(x_{1}, y_{1},a_{1}))=
\varepsilon^{*}(y_{1})=(y_{1},y_{1},\varepsilon(h(y_{1})))=(y_{1},
y_{1},\varepsilon(\beta(a_{1}))) $.\\[-0.2cm]

From the relations (3) and (4) it follows $
i^{*}(x)\odot_{h^{*}(V)} x= \varepsilon^{*}(\beta^{*}(x))$.

Similarly, we prove that $ x\odot_{h^{*}(V)} i^{*}(x)=
\varepsilon^{*}(\alpha^{*}(x))$. Hence, the condition (G3) from
Definition 2.1 holds. Then $(h^{*}(V), \alpha^{*}, \beta^{*},
\odot_{h^{*}(V)},\varepsilon^{*}, i^{*}, X )$ is
 a groupoid.

 $(ii)~ $ We prove that the conditions from Definition 3.1 are verified.

 It is clearly that the condition (3.1.1) from Definition 3.1 is
 verified.

Let now two elements $ x,y\in h^{*}(V)$ and $ k_{1}, k_{2} \in K$,
 where $ x = (x_{1}, y_{1}, a_{1}) $ and $ y = (x_{2}, y_{2}, a_{2})
 $. We have\\[0.1cm]
$ \alpha^{*}(k_{1} x + k_{2} y)= \alpha^{*}( k_{1}x_{1}+ k_{2}
x_{2}, k_{1}
y_{1}+ k_{2} y_{2}, k_{1} a_{1}+  k_{2} a_{2})=$\\
$=  k_{1} x_{1}+ k_{2} x_{2} = k_{1} \alpha^{*}(x_{1}, y_{1},
a_{1}) + k_{2} \alpha^{*}(x_{2}, y_{2}, a_{2}) = k_{1}
\alpha^{*}(x) +k_{2} \alpha^{*}(y).$\\[-0.3cm]

 It follows that $ \alpha^{*}$ is a linear map. Similarly we prove that $ \beta^{*}$ is a linear map.
Therefore the conditions (3.1.2) from Definition 3.1 hold.

Let now two elements $ x_{1}, x_{2}\in X$ and $ k_{1}, k_{2} \in
K$. Using the fact that $h$ is a linear map, one obtains that $ \varepsilon^{*}$ is linear. Indeed, we have\\[0.1cm]
$\varepsilon^{*}(k_{1} x_{1} + k_{2} x_{2})= ( k_{1}x_{1}+ k_{2}
x_{2}, k_{1} x_{1}+ k_{2} x_{2}, h(k_{1} x_{1}+  k_{2} x_{2})=$\\
$= ( k_{1} x_{1}+ k_{2} x_{2},k_{1} x_{1}+ k_{2} x_{2},k_{1}
h(x_{1})+ k_{2} h(x_{2}))   = k_{1} (x_{1}, x_{1}, h(x_{1})) +
k_{2}(x_{2}, x_{2}, h(x_{2})) =$\\
$= k_{1} \varepsilon^{*}(x_{1}) + k_{2} \varepsilon^{*}(x_{2}).$\\[-0.3cm]

Let $ x,y\in h^{*}(V)$ and $ k_{1}, k_{2} \in K$,
 where $ x = (x_{1}, y_{1}, a_{1})$ and $ y = (x_{2}, y_{2}, a_{2})
 $. Using the linearity of the map $ i$, one obtains that $i^{*}$ is linear. Indeed, we have\\[0.1cm]
$ i^{*}(k_{1} x + k_{2} y)= i^{*}( k_{1}x_{1}+ k_{2} x_{2}, k_{1}
y_{1}+ k_{2} y_{2}, k_{1} a_{1}+  k_{2} a_{2})=$\\
$= ( k_{1} y_{1}+ k_{2}y_{2}, k_{1} x_{1}+ k_{2} x_{2}, i(k_{1}
a_{1}+ k_{2} a_{2})) = ( k_{1} y_{1}+ k_{2}y_{2}, k_{1}
x_{1}+k_{2} x_{2}, k_{1}i(a_{1})+ $\\
$+ k_{2}i(a_{2}))= k_{1}( y_{1}, x_{1}, i(a_{1}))+ k_{2}( y_{2},
x_{2}, i(a_{2})) = k_{1} i^{*}(x_{1},
y_{1}, a_{1}) + k_{2} i^{*}(x_{2}, y_{2}, a_{2}) =$\\
$= k_{1} i^{*}(x)+ k_{2} i^{*}(y).$\\[-0.3cm]

 Let now $(x,y,a)\in h^{*}(V)$. Then $ h(x)=\alpha(a) $ and $ h(y)=\beta(a) $. We have\\[0.2cm]
$(x,y,a) + i^{*}(x,y,a) = (x,y, a) + (y,x,i(a))= (x+y, y+x,
a+i(a)),~$ and\\[0.2cm]
$ \varepsilon^{*}(\alpha^{*}(x,y,a))+
\varepsilon^{*}(\beta^{*}(x,y,a))
=\varepsilon^{*}(x)+\varepsilon^{*}(y)=(x, x, \varepsilon(h(x))) +
(y, y, \varepsilon(h(y))) =$\\
$= (x+y, x+y, \varepsilon(h(x))+\varepsilon(h(y)))=(x+y, x+y,
\varepsilon(\alpha(a))+\varepsilon(\beta(a)))=$\\
$=(x+y, y+x, a+i(a)).$\\[-0.3cm]

It follows $~(x,y,a) + i^{*}(x,y,a) =
\varepsilon^{*}(\alpha^{*}(x,y,a))+
\varepsilon^{*}(\beta^{*}(x,y,a))$, and so the condition
(3.1.3)(1) from Definition 3.1 holds.

For to verify the relation $ 3.1.4 (1)$ from Definition 3.1 we
consider the arbitrary elements  $ x, y, z \in h^{*}(V)$, where $
x = (x_{1}, y_{1}, a_{1}), y =(x_{2}, y_{2}, a_{2}) $ and $ z =
(x_{3}, y_{3}, a_{3}) $  such that $ \alpha^{*}(y) = \beta^{*}(x)
=\alpha^{*}(z) $.  Then $ x_{2} = y_{1} = x_{3} $ and follows $ x
= (x_{1}, y_{1}, a_{1} ), y = ( y_{1}, y_{2}, a_{2}) $ and $ z =
(y_{1}, y_{3}, a_{3}) $ with $a_{1}, a_{2}, a_{3} \in V $ such
that $ h(x_{j})=\alpha(a_{j})$ and $ h(y_{j})=\beta(a_{j})$ for $
j=1,2,3.$\\[0.2cm]
$(5)~~~ x\odot_{h^{*}(V)}( y + z - \varepsilon^{*}(\beta^{*}(x))) = (x_{1}, y_{1}, a_{1} )\odot_{h^{*}(V)}( ( y_{1}, y_{2}, a_{2}) +$\\
 $+ (y_{1}, y_{3}, a_{3}) - \varepsilon^{*}(\beta^{*}(x_{1}, y_{1}, a_{1})))
 = (x_{1}, y_{1}, a_{1} )\odot_{h^{*}(V)}( ( y_{1}, y_{2}, a_{2})
 +(y_{1}, y_{3}, a_{3}) - \varepsilon^{*}(y_{1}))= (x_{1}, y_{1}, a_{1} )\odot_{h^{*}(V)}( ( y_{1}, y_{2}, a_{2})
 +(y_{1}, y_{3}, a_{3}) -( y_{1}, y_{1},\varepsilon(h(y_{1})))=$\\
 $=(x_{1}, y_{1}, a_{1}
 )\odot_{h^{*}(V)}(y_{1},y_{2}+y_{3}-y_{1},
 a_{2}+a_{3}-\varepsilon(\beta(a_{1})))=$\\
$= (x_{1},y_{2}+y_{3}-y_{1}, a_{1}
\odot_{V}(a_{2}+a_{3}-\varepsilon(\beta(a_{1})))~$ and\\[0.2cm]
$(6)~~~ x\odot_{h^{*}(V)} y + x\odot_{h^{*}(V)} z - x = (x_{1},
y_{1}, a_{1} )\odot_{h^{*}(V)}(  y_{1}, y_{2}, a_{2}) + $\\
$+ (x_{1}, y_{1}, a_{1} )\odot_{h^{*}(V)}(  y_{1}, y_{3}, a_{3}) -
(x_{1}, y_{1}, a_{1} )= ( x_{1}, y_{2}, a_{1}\odot_{V} a_{2} ) + $\\
$+ ( x_{1}, y_{3}, a_{1}\odot_{V} a_{2} ) -(x_{1}, y_{1}, a_{1} )
= ( x_{1}, y_{2}+ y_{3} - y_{1}, a_{1}\odot_{V} a_{2} +
a_{1}\odot_{V} a_{3}- a_{1} ).$\\[-0.3cm]

Using (5), (6) and the hypothesis\\
 $ a_{1}\odot_{V}(a_{2}+a_{3}-\varepsilon(\beta(a_{1})))= a_{1}\odot_{V}
a_{2} + a_{1}\odot_{V} a_{3}- a_{1} $,  one obtains\\
 $ x\odot_{h^{*}(V)}( y + z - \varepsilon^{*}(\beta^{*}(x))) = x\odot_{h^{*}(V)} y + x\odot_{h^{*}(V)} z - x$.

 Hence the condition 3.1.4(1) from Definition 3.1 holds.

Let now $ x = (x_{1}, y_{1}, a_{1} )\in h^{*}(V), y = ( y_{1},
y_{2}, a_{2})\in h^{*}(V) $ such that $ h(x_{1})=\alpha(a_{1}),
h(y_{1})=\alpha(a_{2})= \beta(a_{1}) $ and $ h(y_{2})=
\beta(a_{2}).$  For all $ k\in K$, we have\\[0.2cm]
$(7)~~ x \odot_{h^{*}(V)}( k y + (1-k)\varepsilon^{*}(\beta^{*}(x)
)) = (x_{1}, y_{1}, a_{1} )\odot_{h^{*}(V)} ( k (y_{1}, y_{2},
a_{2})+$\\
$+(1-k)\varepsilon^{*}(\beta^{*}(x_{1}, y_{1}, a_{1}) ) )= (x_{1},
y_{1}, a_{1} )\odot_{h^{*}(V)} ( k (y_{1}, y_{2}, a_{2}) +
(1-k)\varepsilon^{*}(y_{1}))=$\\
$= (x_{1}, y_{1}, a_{1} )\odot_{h^{*}(V)} ( k (y_{1}, y_{2},
a_{2}) + (1-k)\varepsilon(h(y_{1}) ) )=
 (x_{1}, y_{1}, a_{1}
)\odot_{h^{*}(V)} ( (ky_{1}, ky_{2}, ka_{2}) +$\\
$+ (1-k)( y_{1}, y_{1}, \varepsilon(\beta(a_{1}) ) )= (x_{1},
y_{1}, a_{1} )\odot_{h^{*}(V)} (y_{1},ky_{2}+ (1-k) y_{1}, ka_{2}+
\varepsilon(\beta(a_{1}) ) )=$\\
$= (x_{1},ky_{2}+ (1-k) y_{1}, a_{1}\odot_{V}(ka_{2}+
(1-k)\varepsilon(\beta(a_{1}) ))~ $
and\\[0.2cm]
$(8)~~ k ( x \odot_{h^{*}(V)} y ) + (1-k) x = k ((x_{1}, y_{1},
a_{1} )\odot_{h^{*}(V)}(y_{1}, y_{2}, a_{2}) ) +$\\
$+ (1-k)(x_{1}, y_{1}, a_{1} )= k (x_{1}, y_{2},
a_{1}\odot_{V}a_{2})+ (1-k)(x_{1}, y_{1}, a_{1} ) =$\\
$= ( x_{1}, ky_{2}+ (1- k) y_{1}, k (a_{1}\odot_{V}a_{2})+
(1-k)a_{1}).~$\\[-0.3cm]

Using the equalities (7) and (8) and the hypothesis\\
 $a_{1}\odot_{V}(ka_{2}+ (1-k)\varepsilon(\beta(a_{1})) )= k
(a_{1}\odot_{V}a_{2})+ (1-k)a_{1} $,\\
 one obtains that the condition 3.1.4(2) from Definition 3.1 holds.

In the same manner we prove that the conditions 3.1.4 (3) and
3.1.4 (4) hold. Hence $(h^{*}(V), \alpha^{*}, \beta^{*},
\odot_{h^{*}(V)},\varepsilon^{*}, i^{*}, X )$ is a vector groupoid
\end{proof}
\begin{Prop}
The pair  $(h_{V}^{*},h):(h^{*}(V), X)\longrightarrow (V,V_{0})$
is a vector groupoid morphism, called the {\bf canonical vector
groupoid morphism on $h^{*}(V)$},  where the map
$~h_{V}^{*}:h^{*}(V)\longrightarrow V$ is defined
by:\\[-0.3cm]
$$h_{V}^{*}(x,y,a):=a,~~(\forall)~ (x,y,a)\in
h^{*}(V).$$\\[-1.2cm]
\end{Prop}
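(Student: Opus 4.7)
The plan is to verify directly, by unwinding the definitions, that the pair $(h_V^*, h)$ satisfies the three requirements for a vector groupoid morphism given in Definition 3.2, namely the two groupoid compatibility conditions of Definition 2.2 (commuting with source/target, and with multiplication) plus linearity of both component maps.

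First I would check compatibility with the source and target. For any element $(x,y,a) \in h^*(V)$, the constraint built into the definition of $h^*(V)$ is precisely $h(x) = \alpha(a)$ and $h(y) = \beta(a)$. So
\[
(\alpha \circ h_V^*)(x,y,a) = \alpha(a) = h(x) = (h \circ \alpha^*)(x,y,a),
\]
and analogously for $\beta$. This handles condition (1) of Definition 2.2 essentially for free.

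Next I would verify compatibility with multiplication. A composable pair in $h^*(V)_{(2)}$ has the form $((x,y,a),(y,z,b))$ with $(a,b) \in V_{(2)}$. By definition $\odot_{h^*(V)}$ produces $(x,z, a \odot_V b)$, so
\[
h_V^*\bigl((x,y,a) \odot_{h^*(V)} (y,z,b)\bigr) = a \odot_V b = h_V^*(x,y,a) \odot_V h_V^*(y,z,b),
\]
which is condition (2) of Definition 2.2. At this point $(h_V^*, h)$ is a groupoid morphism in the Ehresmann sense.

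Finally I would check linearity. The map $h$ is linear by hypothesis. For $h_V^*$, note that $h^*(V)$ inherits its vector space structure as a subspace of $X \times X \times V$, with coordinate-wise operations, so projection onto the third coordinate is automatically $K$-linear; a one-line check $h_V^*(k_1 (x_1,y_1,a_1) + k_2 (x_2,y_2,a_2)) = k_1 a_1 + k_2 a_2$ confirms this. Thus the conditions of Definition 3.2 are satisfied and $(h_V^*, h)$ is a vector groupoid morphism. There is no genuine obstacle here; the statement is essentially a bookkeeping verification, and the only subtle point is recognizing that the defining equations $h(x) = \alpha(a)$, $h(y) = \beta(a)$ of $h^*(V)$ are exactly what is needed to make the source/target squares commute.
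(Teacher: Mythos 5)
Your proposal is correct and follows essentially the same route as the paper: verify the source/target intertwining via the defining relations $h(x)=\alpha(a)$, $h(y)=\beta(a)$ of $h^{*}(V)$, check compatibility with the multiplication coordinate-wise, and observe that $h_{V}^{*}$, being the projection onto the third coordinate of a subspace of $X\times X\times V$, is linear. No gaps; the verification matches the paper's proof step for step.
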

\begin{proof}
Let $(x,y,a)\in h^{*}(V).$ Then $ h(x)=\alpha(a)$ and
$h(y)=\beta(a).$ We have\\[0.2cm]
$(\alpha\circ h_{V}^{*})(x,y,a)=\alpha(
h_{V}^{*}(x,y,a))=\alpha(a)=h(x)=h(\alpha^{*}(x,y,a))=(h\circ
\alpha^{*})(x,y,a).$

Therefore, $\alpha\circ h_{V}^{*}=h\circ \alpha^{*}.$ Similarly,
we prove that $\beta\circ h_{V}^{*}=h\circ \beta^{*}.$

For all $(x_{1}, y_{1}, a_{1}), (y_{1}, y_{2}, a_{2})\in
h^{*}(V)$, we have\\[0.2cm]
$h_{V}^{*}((x_{1}, y_{1}, a_{1})\odot_{h^{*}(V)}(y_{1}, y_{2},
a_{2}))=h_{V}^{*}(x_{1}, y_{2},
a_{1}\odot_{V}a_{2})=a_{1}\odot_{V}a_{2}=$\\
$~~=h_{V}^{*}(x_{1}, y_{1},
a_{1})\odot_{V} h_{V}^{*}(y_{1}, y_{2}, a_{2}).$\\[-0.3cm]

It is easy to prove that the map $ h_{V}^{*}$ is linear. Hence,
the conditions from Definition 3.2 are verified. Therefore
$(h_{V}^{*},h)$ is a morphism of vector groupoids.
\end{proof}
\begin{Th}
 The canonical vector groupoid morphism  $(h_{V}^{*}, h):(h^{*}(V),X)\longrightarrow
(V,V_{0})$ verify the universal property:\\[0.3cm]
$({\cal P U}):$ for every vector groupoid
$(V^{\prime},\alpha^{\prime},\beta^{\prime},\odot_{V^{\prime}},
\varepsilon^{\prime},i^{\prime}, X)$ and for every vector groupoid
morphism $(u,h):(V^{\prime},X)\longrightarrow (V,V_{0})$ there
exists an unique $X$- morphism of vector groupoids
$v:V^{\prime}\longrightarrow h^{*}(V)$ such that
 the diagram:\\
$$~~ V^{\prime}~\stackrel{u}{\longrightarrow}~~V$$\\[-1cm]
$$(\exists)v {\searrow}~~~{\nearrow}h_{V}^{*}$$\\[-0.9cm]
$$~~~h^{*}(V)$$\\[-0.5cm]
is comutative, i.e. $ h_{V}^{*} \circ v=u$.
\end{Th}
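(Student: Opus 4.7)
The plan is to construct $v$ explicitly by the only formula available, namely
\[
v(x'):=(\alpha'(x'),\beta'(x'),u(x')), \qquad x'\in V',
\]
and then verify in turn that $v$ lands in $h^*(V)$, that $v$ is an $X$-morphism of vector groupoids, that $h_V^*\circ v=u$, and finally that $v$ is the only such map. The well-definedness of $v$ is precisely the condition $\alpha\circ u=h\circ\alpha'$ and $\beta\circ u=h\circ\beta'$ coming from the hypothesis that $(u,h):(V',X)\to(V,V_0)$ is a groupoid morphism (Definition 2.2), so $h(\alpha'(x'))=\alpha(u(x'))$ and $h(\beta'(x'))=\beta(u(x'))$, which is exactly the requirement that $(\alpha'(x'),\beta'(x'),u(x'))\in h^*(V)$.

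Next I would check that $v$ is an $X$-morphism of vector groupoids. Linearity of $v$ is immediate from the linearity of $\alpha'$, $\beta'$, and $u$, combined with the coordinatewise vector space structure on $h^*(V)\subseteq X\times X\times V$. The identities $\alpha^*\circ v=\alpha'$ and $\beta^*\circ v=\beta'$ follow by reading off the first and second coordinates of $v(x')$, which gives the $X$-morphism condition with $f_0=\mathrm{Id}_X$. To check multiplicativity, take $(x',y')\in V'_{(2)}$, so $\beta'(x')=\alpha'(y')$; then $(v(x'),v(y'))$ is composable in $h^*(V)$ and
\[
v(x')\odot_{h^*(V)}v(y')=(\alpha'(x'),\beta'(y'),u(x')\odot_V u(y'))=(\alpha'(x'\odot_{V'}y'),\beta'(x'\odot_{V'}y'),u(x'\odot_{V'}y')),
\]
where the last equality uses Proposition 2.1(i) applied in $V'$ and the multiplicativity of $u$. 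This equals $v(x'\odot_{V'}y')$. Commutativity of the triangle is then trivial: $h_V^*(v(x'))=u(x')$ by construction.

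Finally, for uniqueness, suppose $w:V'\to h^*(V)$ is any $X$-morphism of vector groupoids with $h_V^*\circ w=u$. Writing $w(x')=(w_1(x'),w_2(x'),w_3(x'))$, the $X$-morphism conditions $\alpha^*\circ w=\alpha'$ and $\beta^*\circ w=\beta'$ force $w_1=\alpha'$ and $w_2=\beta'$, while $h_V^*\circ w=u$ forces $w_3=u$. Hence $w=v$.

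I do not expect any real obstacle; the formula for $v$ is dictated by the source/target/base constraints, and all verifications reduce to unwinding the definitions of $h^*(V)$, $\alpha^*$, $\beta^*$, $\odot_{h^*(V)}$, together with the morphism hypotheses on $(u,h)$. The step that deserves the most care is the multiplicativity check, where one must invoke Proposition 2.1(i) to rewrite $(\alpha'(x'),\beta'(y'))$ as $(\alpha'(x'\odot_{V'}y'),\beta'(x'\odot_{V'}y'))$; everything else is bookkeeping.
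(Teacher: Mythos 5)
Your proof is correct and follows essentially the same route as the paper: the same explicit formula $v(x')=(\alpha'(x'),\beta'(x'),u(x'))$, the same well-definedness and morphism checks, and the same commutativity verification. You actually spell out the uniqueness argument (reading off the three components from $\alpha^*\circ w=\alpha'$, $\beta^*\circ w=\beta'$, $h_V^*\circ w=u$), which the paper leaves as ``standard.''
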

\begin{proof}
 Define $ v:V^{\prime} \to  h^{*}(G)~$ by
$~v(a^{\prime}):= (\alpha^{\prime}(a^{\prime}),
\beta^{\prime}(a^{\prime}),u(a^{\prime})),~(\forall)~
a^{\prime}\in V^{\prime}.~$

We have $ v(a^{\prime})\in h^{*}(V),$ since $
h(\alpha^{\prime}(a^{\prime}))=\alpha(u(a^{\prime})) $ and $
h(\beta^{\prime}(a^{\prime}))=\beta(u(a^{\prime})) $.

We have $\alpha^{*}\circ v=\alpha^{\prime},$ since
$(\alpha^{*}\circ
v)(a^{\prime})=\alpha^{*}(v(a^{\prime}))=\alpha^{*}(\alpha^{\prime}(a^{\prime}),
\beta^{\prime}(a^{\prime}),u(a^{\prime}))=\alpha^{\prime}(a^{\prime}).$
Similarly, we verify that $ \beta^{*} \circ v=\beta^{\prime}$.

Let $(a^{\prime}, b^{\prime})\in V_{(2)}^{\prime}.$ Then $
\beta^{\prime}(a^{\prime})=\alpha^{\prime}(b^{\prime}).$ We
have\\[0.2cm]
$v(a^{\prime}\odot_{V^{\prime}}b^{\prime})=(\alpha^{\prime}(a^{\prime}\odot_{V^{\prime}}b^{\prime}),
\beta^{\prime}(a^{\prime}\odot_{V^{\prime}}b^{\prime}),u(a^{\prime}\odot_{V^{\prime}}b^{\prime}))=
(\alpha^{\prime}(a^{\prime}),
\beta^{\prime}(b^{\prime}),u(a^{\prime})\odot_{V}u(b^{\prime}))$
and\\
$v(a^{\prime})\odot_{h^{*}(V)}v(b^{\prime})=(\alpha^{\prime}(a^{\prime}),
\beta^{\prime}(a^{\prime}),u(a^{\prime}))\odot_{h^{*}(V)}(\beta^{\prime}(a^{\prime}),
\beta^{\prime}(b^{\prime}),u(b^{\prime}))=$\\
$=(\alpha^{\prime}(a^{\prime}),
\beta^{\prime}(b^{\prime}),u(a^{\prime})\odot_{V}u(b^{\prime}))$.

It follows that $v(a^{\prime}\odot_{V^{\prime}}b^{\prime})=
v(a^{\prime})\odot_{h^{*}(V)}v(b^{\prime}).$

Using the linearity of the maps $ \alpha^{\prime}, \beta^{\prime}$
and $u$, it is easy to verify that $v$ is a linear map. Therefore,
$v$ is a  $~X-$ morphism of vector groupoids.

We have $ ( h_{V}^{*} \circ
v)(a^{\prime})=h_{V}^{*}(v(a^{\prime}))=h_{V}^{*}(\alpha^{\prime}(a^{\prime}),
\beta^{\prime}(a^{\prime}),u(a^{\prime}))= u(a^{\prime}), $ for
all $a^{\prime}\in V^{\prime}$, i.e. $ h_{V}^{*} \circ v =u.$
 Finally,  we prove by a standard manner that $v$ is unique.
\end{proof}
\begin{Prop}
The induced vector groupoid  $ (h^{*}(V), X)$ of a transitive vector
groupoid $ (V, V_{0})$ via the linear map
 $h:X\longrightarrow V_{0}$ is transitive.
 \end{Prop}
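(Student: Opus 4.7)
The plan is to unpack the definition of transitivity in both groupoids and use the surjectivity of the anchor map of $V$ to produce, for any pair in $X\times X$, a third-coordinate element of $V$ that fits into $h^*(V)$.

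First, I would recall what has to be shown: $(h^*(V),X)$ is transitive iff its anchor map $(\alpha^*,\beta^*):h^*(V)\to X\times X$ is surjective. By the definitions of $\alpha^*$ and $\beta^*$ given just before Proposition~4.1, this amounts to showing that for every $(x,y)\in X\times X$ there exists $a\in V$ with $(x,y,a)\in h^*(V)$, i.e.\ with $h(x)=\alpha(a)$ and $h(y)=\beta(a)$.

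Next, I would invoke the hypothesis that $(V,V_0)$ is transitive: its anchor map $(\alpha,\beta):V\to V_0\times V_0$ is surjective. Applying this to the pair $(h(x),h(y))\in V_0\times V_0$ yields some $a\in V$ with $\alpha(a)=h(x)$ and $\beta(a)=h(y)$. This $a$ is exactly what is needed: the triple $(x,y,a)$ lies in $h^*(V)$ by the very defining condition of the set $h^*(V)$, and $(\alpha^*,\beta^*)(x,y,a)=(x,y)$.

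Since $(x,y)\in X\times X$ was arbitrary, the anchor map $(\alpha^*,\beta^*)$ is surjective, so $(h^*(V),X)$ is transitive. There is no real obstacle here; the statement is essentially an immediate consequence of how the set $h^*(V)$ was defined as a pullback-like construction, and the only ingredient used is the surjectivity of $(\alpha,\beta)$ guaranteed by the transitivity hypothesis on $(V,V_0)$.
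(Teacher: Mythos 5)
Your proposal is correct and follows exactly the same argument as the paper: given $(x,y)\in X\times X$, apply the surjectivity of the anchor $(\alpha,\beta)$ of $V$ to the pair $(h(x),h(y))$ to obtain $a\in V$ with $\alpha(a)=h(x)$ and $\beta(a)=h(y)$, so that $(x,y,a)\in h^{*}(V)$ maps to $(x,y)$ under $(\alpha^{*},\beta^{*})$. There is nothing to add.
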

 \begin{proof}
 We prove that the anchor $ ( \alpha^{*}, \beta^{*}): h^{*}(V) \to X\times X$ is a surjective map.
For this, let an arbitrary element $(x,y)\in X\times X.$ We have
$(h(x),h(y))\in V_{0}\times V_{0}.$ Since the anchor $(\alpha, \beta
):V\to V_{0}\times V_{0}$ of the groupoid $V$ is surjective, there
exists an element  $a\in V$ such that $(\alpha ,\beta)(a)= (h(x),
h(y)). $ Then $\alpha(a)= h(x), \beta(a)=h(y)$ and $(x,y,a)\in
h^{*}(V).$  We have that $((\alpha^{*}, \beta^{*})(x,y,a)=
(\alpha^{*}(x,y,a), \beta^{*}(x,y,a))=(x,y)$. Consequently,
$(\alpha^{*}, \beta^{*})$ is surjective. Hence $h^{*}(V)$ is a
transitive vector groupoid.
\end{proof}

\vspace*{0.4cm}

\hspace*{0.7cm}West University of Timi\c soara\\
\hspace*{0.7cm} Department of Mathematics\\
\hspace*{0.7cm} Bd. V. P{\^a}rvan,no.4, 300223, Timi\c soara, Romania\\
\hspace*{0.7cm}E-mail:vpoputa@yahoo.com; ivan@math.uvt.ro


\begin{thebibliography}{99}
\addcontentsline{toc}{chapter}{Bibliografie}


\bibitem{cone} A. Connes, \textit{Noncommutative Geometry.} Academic Press Inc., 1994.

\bibitem{cdwe}  A. Coste, P. Dazord and A. Weinstein, \emph{Groupoides symplectiques,} Publ. Dept. Math. Lyon, 2/A
(1987),1--62.


\bibitem{gost} M. Golubitsky and I. Stewart, \emph{Nonlinear dynamics of networks: the groupoid formalism,} Bull. Amer. Math.
Soc., {\bf 43}(2006), no. 3, 305--364.

\bibitem{ivan} Gh. Ivan, \emph{Special morphisms of groupoids,} Novi Sad J. Math., {\bf 32}(2002),no.2,
23-36; MR 2003m:20074.

\bibitem{mack} K. Mackenzie, \textit{Lie Groupoids and Lie Algebroids in
Differential Geometry.} London Math. Soc., Lecture Notes Series,
\textbf{124}, Cambridge University Press., 1987; Revised edition,
\textbf{213}, 2005.

\bibitem{poiv} V. Popu\c{t}a and Gh. Ivan, {\it Vector groupoids}. ArXiv:1012.46081v1[math.GR], 21 Dec 2010.

\bibitem{rare} A. Ramsey and J. Renault, \textit{Groupoids in Analysis,
Geometry and Physics.} Contemporary Mathematics, \textbf{282}, AMS
Providence, RI, 2001.

\bibitem{wein} A. Weinstein, \textit{Groupoids: Unifying internal and external
symmetries.} Notices Amer. Math. Soc., \textbf{43} (1996), 744--752;
MR 97f:20072.

\end{thebibliography}
\end{document}